\documentclass[sn-mathphys-num]{sn-jnl}

\usepackage{graphicx}%
\usepackage{multirow}%
\usepackage{amsmath,amssymb,amsfonts}%
\usepackage{amsthm}%
\usepackage{mathrsfs}%
\usepackage[title]{appendix}%
\usepackage{xcolor}%
\usepackage{textcomp}%
\usepackage{manyfoot}%
\usepackage{booktabs}%
\usepackage{algorithm}%
\usepackage{algorithmicx}%
\usepackage{algpseudocode}%
\usepackage{listings}%

\newtheorem{theorem}{Theorem}[section]

\newtheorem{corollary}[theorem]{Corollary}

\newtheorem{definition}[theorem]{Definition}

\newtheorem{proposition}[theorem]{Proposition}

\begin{document}

\title[Fundamental Theorem of Algebra]{An Algebraic Approach to the Fundamental Theorem of Algebra}


\author*[1]{\fnm{Priyabrata} \sur{Mandal}}\email{p.mandal@manipal.edu}

\author[2]{\fnm{Sajad A.} \sur{Sheikh}}\email{sajadsheikh@uok.edu.in}


\affil*[1]{\orgdiv{Department of Mathematics}, \orgname{Manipal Institute of Technology, Manipal Academy of Higher Education}, \orgaddress{ \city{Manipal}, \postcode{576104}, \state{Karnataka}, \country{India}}}

\medskip 

\affil[2]{\orgdiv{Department of Mathematics}, \orgname{South Campus, University of Kashmir}, \orgaddress{ \city{Anantnag}, \postcode{192101}, \state{Jammu and Kashmir}, \country{India}}}

\abstract{
	In this paper, we investigate the algebraic counterpart of the Fundamental Theorem of Algebra. We explore the concept of {\it real-closed} fields and quadratic forms.  We show, by means of Galois theory, that $\mathcal F(\sqrt{-1})$ is algebraically closed if $\mathcal F$ is real-closed. Lastly, we explain the algebraic closure of $\mathbb R(\sqrt{-1})=\mathbb C$ by demonstrating the real-closeness of $\mathbb R$.
}

\keywords{Forms over real fields, Algebraic theory of quadratic forms, Real-closed fields, Orderings on a field.}


\pacs[MSC Classification]{11E10, 11E81}

\maketitle

\section{Introduction}
The study of polynomials and their roots has historically occupied the interest of mathematicians, and the Fundamental Theorem of Algebra is the fruit of centuries of mathematical efforts in that direction. The Fundamental Theorem of Algebra states that every non-constant polynomial equation with complex coefficients has at least one complex root. This theorem has a rich history and has been proven using various techniques from different branches of mathematics, including analytic, algebraic, and topological methods. In the $16^{th}$ century, del Ferro and Tartaglia discovered solutions for cubic equations via radicals, and Cardano's student Ferrari extended this to quartic equations. However, despite considerable efforts over subsequent centuries, no such formula could be found for polynomials of degrees 5 and higher. Ruffini, Abel and Galois ultimately proved the non-existence of algebraic solutions for general polynomials beyond the quartic degree, resolving a long-standing problem in algebra \cite{boyer2012history, edwards1984galois}.

\medskip
  
  In the $17^{th}$ century, René Descartes and Pierre de Fermat also made significant contributions in advancing our  understanding of polynomial equations and their roots. However, it was not until the early $19^{th}$ century that the first rigorous proofs of the Fundamental Theorem of Algebra emerged, primarily through the work of Carl Friedrich Gauss \cite{gaussproof, stillwell2010mathematics}. Since Gauss's seminal work, numerous alternative proofs have emerged, offering fresh insights and employing a wide array of methods and approaches. Remarkably, nearly a hundred proofs of the Fundamental Theorem of Algebra were published by the year 1907 \cite{Remmert1995}, reflecting the theorem's significance and the enduring fascination of the mathematical community with it. The proliferation of proofs continued in the subsequent decades, with at least another hundred proofs published thereafter.
  The abundance of proofs for the Fundamental Theorem of Algebra parallels the multitude of proofs that exist for the celebrated Pythagorean theorem \cite{Maor2007}, highlighting the fundamental nature of these mathematical statements and their ability to inspire diverse and creative proofs across generations of mathematicians.
\medskip

  In addition, various efforts have focused on minimal algebraic and analytic requirements for proving the Fundamental Theorem of Algebra. Shipman \cite{shipman} demonstrated that any field where polynomials of prime degree have roots is algebraically closed, a result further simplified in \cite{aliabadi}. For more details on minimal analytic approaches, refer to \cite{basu}.

\medskip 

We begin by defining real closed fields and establishing their fundamental properties. We then introduce the concept of field extensions, leading to the construction of the field $\mathcal F(\sqrt{-1})$, where $\mathcal F$ is a real closed field. Using basic Galois theory, we demonstrate that $\mathcal F(\sqrt{-1})$ forms an algebraically closed field, laying the foundation for the main result of our proof.
Finally, we prove that the field of real numbers $\mathbb R$ is real closed, a key step in establishing the algebraic closure of the field of complex numbers $\mathbb C$. By showing that $\mathbb R(\sqrt{-1})=\mathbb C$ is algebraically closed, we establish the Fundamental Theorem of Algebra within an algebraic framework, expressing the deep connections between real and complex analysis.

\medskip

\section{Preliminaries}\label{prelim}
In this section, we introduce the notations that will be used throughout this paper. For more detailed explanations, readers are directed to the relevant sections in \cite{lam}. It is assumed that all fields discussed have characteristics different from $2$. 
\medskip 

For a vector space $V$ with a bilinear form $b$, two vectors $v_1$ and $v_2$ are said to be {\it orthogonal} to each other if $b(v_1,v_2)=0$. Let $V^\perp$ denote the set of all vectors orthogonal to every vector in $V$. The bilinear space $(V,b)$ is {\it non-singular} if $V^\perp=0$ (see \cite{sch}, Chapter $1$ for more details).
Consider a quadratic form $q$ over a field $\mathcal F$, with $V$ as its associated vector space. We call $(V,q)$ a { quadratic space} over $\mathcal F$. The dimension of $q$ is equal to the dimension of $V$ when considered as a vector space over $\mathcal F$, and is denoted by ${\rm dim}\ q$. An $n$-dimensional quadratic form is equivalent to $a_1v_1^2+a_2v_2^2+ \cdots +a_nv_n^2$ over $\mathcal F$. We denote the quadratic form $a_1v_1^2+a_2v_2^2+ \cdots +a_nv_n^2$ by $\langle a_1,\ldots,a_n\rangle$.
\medskip 

For a quadratic form $q$, one can associate a bilinear form $b_q$ defined by \[b_q(x,y):=\frac{1}{2}[q(x+y)-q(x)-q(y)]\] 
If the bilinear form $b_q$ is non-singular, then we say $q$ is a { non-singular} quadratic form. Throughout this article, a quadratic form always refers to a non-singular quadratic form.

\medskip 

Suppose $\sigma(\mathcal F)$ is the collection of members in $\mathcal F$ which are sum of squares in $\mathcal F$. We define $\mathcal F$ to be {\it formally real} if $-1$ cannot be written as a sum of squares in $\mathcal F$. In other words, $\mathcal F$ is formally real if $-1 \notin \sigma(\mathcal F)$. By a {\it Pythagorean field} we mean a field in which the sum of two squares resulting a square. Thus, for a Pythagorean field, $\sigma(\mathcal F)$ is the same as $\mathcal F^2$.
\medskip 

\begin{definition} (\cite{lam}, Chapter $8$, Definition $1.2$)
Let $\Sigma$ be a proper subset of $\mathcal F$. Then $\Sigma$ is said to be an ordering on $\mathcal F$ if 
\begin{enumerate}
    \item $\mathcal F^2\subseteq \Sigma$
    \item $\Sigma$ is closed under addition.
    \item $\Sigma$ is closed under multiplication.
    \item $\Sigma \cup -\Sigma=\mathcal F$
    \item $\Sigma \cap -\Sigma=\{0\}$.
\end{enumerate}
\end{definition}

Note that $-1 \notin \Sigma$. Indeed, if $-1 \in \Sigma$, then any $x \in F$ can be written as \[x=\left[\left(\frac{x+1}{2}\right)^2+(-1)\left(\frac{x-1}{2}\right)^2 \right] \in \Sigma+(\Sigma\cdot \Sigma) \subseteq \Sigma+\Sigma \subseteq \Sigma\]
Hence $\mathcal F \subseteq \Sigma$ and so $\Sigma=\mathcal F$. This contradicts the definition of an ordering.
Thus $-1 \notin \Sigma$. Also, since $\mathcal F^2 \subset \Sigma$ and $\Sigma+\Sigma \subset \Sigma$, we have $\sigma(\mathcal F) \subset \Sigma$. Combining the facts $\sigma(\mathcal F) \subset \Sigma$ and  $-1 \notin \Sigma$, we conclude $-1 \notin \sigma(\mathcal F)$. Therefore, $\mathcal F$ is formally real if and only if it has at least one ordering.


\section{Formally real fields}
In this section, we discuss formally real fields. For more details, we refer to (\cite{lam}, Chapter $8$). Recall that a field $\mathcal F$ is said to be formally real if $-1 \notin \sigma(\mathcal F)$. 
 Let $\mathcal F^\times$ denote the set of non-zero elements of $\mathcal F$, i.e., $\mathcal F^\times= \mathcal F \setminus \{0\}$. 
    A Euclidean field $\mathbb E$ is formally real with $\mathbb E^{\times}= \mathbb E^{\times 2} \cup (-\mathbb E^{\times2})$. 
    More precisely, a Euclidean field is an ordered field in which every non-negative element is a square. A field $\mathcal F$ is said to be {\it quadratically closed} if $\mathcal F^2=\mathcal F$.
\medskip 

\begin{proposition}[\cite{lam}, Chapter 8, Proposition 1.6]\label{Euclidean is Pythagorean}
    A field $\mathbb E$ is Euclidean if and only if $\mathbb E$ is a Pythagorean field with a unique ordering.
\end{proposition}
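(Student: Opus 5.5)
We prove the two implications separately, working directly with the definition of an ordering given in Section~\ref{prelim} and with the definition of a Euclidean field as a formally real field $\mathbb E$ with $\mathbb E^{\times}=\mathbb E^{\times 2}\cup(-\mathbb E^{\times 2})$.

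For the forward direction, suppose $\mathbb E$ is Euclidean. To show $\mathbb E$ is Pythagorean, take $a,b\in\mathbb E$ and put $c=a^2+b^2$. If $c=0$ it is a square. Otherwise $c\in\mathbb E^{\times 2}$ or $-c\in\mathbb E^{\times 2}$. In the second case $-1=(a^2+b^2)/(-c)$ would be a sum of squares, because a quotient of a sum of squares by a nonzero square is again a sum of squares. This contradicts formal reality, so $c$ is a square.

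For uniqueness of the ordering, set $\Sigma_0=\mathbb E^2$. We check the axioms of an ordering for $\Sigma_0$:
\begin{enumerate}
\item closure under multiplication is clear;
\item closure under addition is the Pythagorean property just proved;
\item $\Sigma_0\cup-\Sigma_0=\mathbb E$ is the Euclidean hypothesis;
\item $\Sigma_0\cap-\Sigma_0=\{0\}$ holds since $x^2=-y^2\neq 0$ would give $-1=(x/y)^2$.
\end{enumerate}
So $\Sigma_0$ is an ordering. Any ordering $\Sigma$ satisfies $\mathbb E^2\subseteq\Sigma$ by axiom~1. If $x\in\Sigma\setminus\mathbb E^2$, then $x\neq 0$, so $x=-y^2$ with $y\ne 0$. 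Hence $-y^2\in\Sigma\cap-\Sigma=\{0\}$, a contradiction. Thus $\Sigma=\mathbb E^2$, and the ordering is unique.

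For the converse, suppose $\mathbb E$ is Pythagorean with a unique ordering $\Sigma$. The field is formally real, since by the remark in Section~\ref{prelim} it has an ordering. Being Pythagorean gives $\sigma(\mathbb E)=\mathbb E^2$, so we must show $\mathbb E^{\times}=\mathbb E^{\times2}\cup-\mathbb E^{\times 2}$. This is the main obstacle: it requires producing orderings, not just using one.

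Suppose $a\in\mathbb E^\times$ with neither $a$ nor $-a$ a square. Both $a$ and $-a$ then lie outside $\sigma(\mathbb E)=\mathbb E^2$. We invoke the standard fact (Artin--Schreier, \cite{lam}, Chapter 8) that in a formally real field, for any $a\notin-\sigma(\mathbb E)$ there is an ordering containing $a$. The proof runs through Zorn's lemma applied to preorderings containing $\sigma(\mathbb E)+a\,\sigma(\mathbb E)$. One shows this set is a proper preordering, i.e.\ $-1\notin$ it, using $-a\notin\sigma(\mathbb E)$. A maximal proper preordering is then an ordering. Applying this fact to $a$ and to $-a$ gives two orderings $\Sigma_1\ni a$ and $\Sigma_2\ni -a$. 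Since $a\neq0$, axiom~5 forces $\Sigma_1\neq\Sigma_2$, contradicting uniqueness. Hence every nonzero element is $\pm$ a square, and $\mathbb E$ is Euclidean. We rely on the cited existence result for orderings rather than reproving it in full.
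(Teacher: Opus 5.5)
The paper gives no proof of this proposition; it only cites Lam. Your argument is correct and is essentially the standard proof from that source. In the forward direction, $\mathbb E^2$ is itself an ordering that every ordering must equal. In the converse, the Artin--Schreier fact that any $a\notin-\sigma(\mathbb E)$ lies in some ordering forces the unique ordering to be $\sigma(\mathbb E)=\mathbb E^2$.

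You are also right to use only the implication ``has an ordering $\Rightarrow$ formally real'' from the remark in the Preliminaries. That is the only direction the paper actually proves there; the reverse direction is exactly the Zorn's-lemma result you invoke.
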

\medskip

\begin{definition}
A formally real field $\mathcal F$ is said to be {\it real-closed} if for any algebraic extension $\mathbb K/\mathcal F$ which is proper, $\mathbb K$ is not formally real.
    
\end{definition}
\medskip

\begin{proposition}[\cite{lam}, Chapter 8, Theorem 1.7]\label{real closed is Euclidean}
    If $\mathcal F$ is real-closed, then $\mathcal F$ is Euclidean.
\end{proposition}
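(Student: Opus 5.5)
We must show that for a real-closed field $\mathcal F$ and any $a\in\mathcal F^\times$, either $a$ or $-a$ is a square; since $\mathcal F$ is formally real, this gives $\mathcal F^\times=\mathcal F^{\times2}\cup(-\mathcal F^{\times2})$, which is the definition of Euclidean. The tool is the definition of real-closedness: any proper algebraic extension fails to be formally real. So suppose $a\notin\mathcal F^{2}$. Then $\mathbb K=\mathcal F(\sqrt a)$ is a proper quadratic extension, hence not formally real, and we may write
\[
-1=\sum_i (x_i+y_i\sqrt a)^2=\sum_i x_i^2+a\sum_i y_i^2+2\sqrt a\sum_i x_iy_i .
\]
Comparing coordinates in the basis $\{1,\sqrt a\}$ gives $-1=s+at$, where $s=\sum x_i^2$ and $t=\sum y_i^2$ lie in $\sigma(\mathcal F)$. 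We have $t\neq 0$: otherwise $-1=s\in\sigma(\mathcal F)$, contradicting formal reality. Hence
\[
-a=\frac{1+s}{t}=\frac{(1+s)\,t}{t^2}\in\sigma(\mathcal F).
\]
So whenever $a\notin \mathcal F^2$, we get $-a\in\sigma(\mathcal F)$.

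The second step is to show that $\mathcal F$ is Pythagorean, so that $\sigma(\mathcal F)=\mathcal F^2$. Let $b\in\sigma(\mathcal F)$ with $b\ne0$, and suppose $b\notin\mathcal F^2$. By the first step, $-b\in\sigma(\mathcal F)$. Writing $b=\sum u_j^2$ and $-b=\sum v_k^2$, we get
\[
-1=\frac{-b}{b}=\frac{(-b)\,b}{b^2}\in\sigma(\mathcal F),
\]
which contradicts formal reality. Therefore every nonzero sum of squares is a square, i.e.\ $\sigma(\mathcal F)=\mathcal F^2$.

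Combining the two steps: for $a\in\mathcal F^\times$, either $a\in\mathcal F^{2}$, or $-a\in\sigma(\mathcal F)=\mathcal F^2$. Thus $\mathcal F^\times=\mathcal F^{\times2}\cup(-\mathcal F^{\times2})$, and $\mathcal F$ is Euclidean. These two sets are disjoint, since $-1\notin\mathcal F^2$. Consequently the squares form the unique ordering, which is consistent with Proposition \ref{Euclidean is Pythagorean}.

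The main point needing care is the coordinate comparison in the first step, which uses that $\{1,\sqrt a\}$ is a basis of $\mathbb K$. It also needs $t\neq0$, which follows from formal reality as shown. The characteristic is not $2$ because formally real fields have characteristic $0$. Everything else is routine closure of $\sigma(\mathcal F)$ under sums, products and quotients by nonzero elements. Closure under quotients holds because $1/t=t/t^2$.
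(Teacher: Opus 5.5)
The paper gives no proof of this proposition; it only cites Lam (Chapter 8, Theorem 1.7), so there is no in-paper argument to compare against. Your proof is correct and complete, and it runs in three steps:
\begin{itemize}
\item From $-1\in\sigma(\mathcal F(\sqrt a))$ you obtain $-1=s+at$ with $s,t\in\sigma(\mathcal F)$ and $t\neq 0$. This gives $-a\in\sigma(\mathcal F)$ for every non-square $a$.
\item Applying this to a non-square $b\in\sigma(\mathcal F)$ would give $-1=(-b)b/b^2\in\sigma(\mathcal F)$. Hence $\sigma(\mathcal F)=\mathcal F^2$.
\item Together these give $\mathcal F^\times=\mathcal F^{\times2}\cup(-\mathcal F^{\times2})$.
\end{itemize}

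A frequently used alternative fixes an ordering $\Sigma$ of $\mathcal F$. For $0\neq a\in\Sigma$, the same identity would force $-1=s+at\in\Sigma$, so $\mathcal F(\sqrt a)$ is formally real and hence equals $\mathcal F$. This yields $\Sigma=\mathcal F^2$ in one stroke, with closure of $\mathcal F^2$ under addition inherited from $\Sigma$. That route is shorter, but it needs the existence of an ordering on a formally real field. That is the Artin--Schreier theorem, proved with Zorn's lemma, and the paper's preliminaries only argue the converse direction.

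Your route avoids orderings entirely. It also produces the Pythagorean property explicitly, which is what the paper later uses to show $\mathbb E(\sqrt{-1})$ is quadratically closed.

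The one point you merely assert is that $\mathcal F^2$ is then the unique ordering. It follows immediately from $\sigma(\mathcal F)=\mathcal F^2$ and $-1\notin\mathcal F^2$, and it is not needed for the definition of Euclidean you are using.
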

\medskip

\begin{theorem}
\label{odd extension is formally real}
Let $\mathcal F$ be a formally real field. Let $\mathbb K$ be a field extension of $\mathcal F$ of an odd degree. Then $\mathbb K$ is also a formally real field.    
\end{theorem}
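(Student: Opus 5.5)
Since every algebraic extension of finite degree is a finite extension, I will argue by induction on the odd degree $n=[\mathbb K:\mathcal F]$, following the classical Artin--Springer style argument. The case $n=1$ is trivial. For the inductive step I first reduce to the simple-extension case. If $\mathbb K/\mathcal F$ is not simple, I pick any $\alpha\in\mathbb K\setminus\mathcal F$. By the tower law, $[\mathcal F(\alpha):\mathcal F]$ and $[\mathbb K:\mathcal F(\alpha)]$ are both odd and both strictly smaller than $n$. Applying the induction hypothesis twice (first to $\mathcal F(\alpha)/\mathcal F$, then to $\mathbb K/\mathcal F(\alpha)$, since $\mathcal F(\alpha)$ is formally real) gives the claim. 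So it suffices to treat $\mathbb K=\mathcal F(\alpha)\cong \mathcal F[x]/(f)$ with $f$ irreducible of odd degree $n$.

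Suppose, for contradiction, that $-1$ is a sum of squares in $\mathbb K$. Each element of $\mathbb K$ is $g(\alpha)$ for a unique polynomial $g\in\mathcal F[x]$ with $\deg g\le n-1$. Hence there exist polynomials $g_1,\dots,g_r$ of degree at most $n-1$ and some $h\in\mathcal F[x]$ with
\[
1+g_1(x)^2+\cdots+g_r(x)^2 = f(x)\,h(x).
\]
The key step is a degree count. The left side has even degree $2m\le 2n-2$, where $m=\max\deg g_i$. There is no cancellation among the leading terms: the coefficient of $x^{2m}$ is a sum of squares of leading coefficients of those $g_i$ of degree $m$. This sum is nonzero because $\mathcal F$ is formally real, so a nonempty sum of nonzero squares cannot vanish (otherwise $-1$ would be a sum of squares). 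If all $g_i$ are constant, the left side is a nonzero constant, namely $1$ plus a sum of squares, which cannot be divisible by $f$. In the remaining case $\deg h=2m-n$ is odd and at most $n-2$.

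Since $h$ has odd degree, it has an irreducible factor $p$ of odd degree $d\le n-2$. Let $\beta$ be a root of $p$ in $\mathbb L=\mathcal F[x]/(p)$, so $[\mathbb L:\mathcal F]=d$ is odd and less than $n$. By the induction hypothesis $\mathbb L$ is formally real. Evaluating the identity at $\beta$ gives $1+\sum g_i(\beta)^2=0$, which expresses $-1$ as a sum of squares in $\mathbb L$. This is a contradiction. I expect the main obstacle to be precisely this leading-coefficient/degree bookkeeping, which shows that $\deg h$ is odd and smaller than $n$. Characteristic $\neq 2$ is assumed throughout, and formally real fields have characteristic $0$ anyway.
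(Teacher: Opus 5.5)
Your proof is correct, but it takes a different route from the paper. The paper argues in two lines with quadratic forms. Formal reality of $\mathcal F$ means the forms $n\langle 1\rangle$ are anisotropic over $\mathcal F$. Springer's theorem (anisotropic forms stay anisotropic over odd-degree extensions, cited from Lam) then carries this to $\mathbb K$, which is exactly formal reality of $\mathbb K$. You instead give Artin's original self-contained argument:
\begin{itemize}
\item strong induction on the odd degree, with a reduction to simple extensions via the tower law;
\item the leading-coefficient count showing $\deg h = 2m-n$ is odd and at most $n-2$;
\item descent to the smaller odd-degree extension $\mathcal F[x]/(p)$.
\end{itemize}
Your argument is really the proof of Springer's theorem specialized to $\langle 1,1,\dots,1\rangle$ with one coordinate fixed at $1$. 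The non-cancellation of leading coefficients is exactly where anisotropy, i.e.\ formal reality of $\mathcal F$, enters. Because the constant $1$ is present, the vector $(1,g_1(\beta),\dots,g_r(\beta))$ is automatically nonzero. So you avoid the coprimality normalization of the $g_i$ that the general Springer proof needs to guarantee a nontrivial zero over $\mathcal F(\beta)$. The paper's route buys brevity, fit with its quadratic-form framework, and the generality of Springer's theorem for arbitrary anisotropic forms. Yours buys an elementary argument with no external theorem. It also handles any number $r$ of squares directly. By contrast, the paper implicitly needs $n\langle 1\rangle$ anisotropic for every $n$, since $-1$ being a sum of $r$ squares in $\mathbb K$ makes $(r+1)\langle 1\rangle$ isotropic.
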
 
\begin{proof}
Since $\mathcal F$ is formally real, we have $-1 \notin \sigma(\mathcal F)$. Consider an $n$-dimensional quadratic space $(V,q)$ over $\mathcal F$ with quadratic form $q=n \langle 1 \rangle= \underbrace {\langle 1,1, \dots, 1 \rangle}_{n\text{ times }}$. Then $q$ must be anisotropic over $\mathcal F$. Otherwise if $q$ is isotropic, we have 
  
\begin{align*}
     0&=x_1^2+x_2^2+\cdots +x_n^2, \text{ where } (x_1,x_2,\dots , x_n) \in V\\
     \text {i.e., }  -x_n^2&=x_1^2+x_2^2+\cdots +x_{n-1}^2
\end{align*}
    
    In particular, $-1$ is a sum of squares over $\mathcal F$, which is a contradiction. Therefore, the quadratic form $n \langle 1 \rangle$ becomes anisotropic over $\mathcal F$. 
    As $\mathbb K/\mathcal F$ is an odd degree extension, by Springer's theorem (see \cite{lam}, Chapter 7, Theorem 2.7), $ q \otimes_\mathcal F \mathbb K= n\langle 1 \rangle \otimes_\mathcal F \mathbb K $ is anisotropic over $\mathbb K$. Thus, $\mathbb K$ is formally real. 
\end{proof}
\begin{corollary} (\cite{lam}, Chapter 8, Corollary 2.3)
    Let $\mathcal F$ be a real-closed field. Then any odd degree polynomial in $\mathcal F[x]$ has a root in $\mathcal F$. 
\end{corollary}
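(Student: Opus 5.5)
We argue by induction on the odd degree $n$ of a polynomial $f \in \mathcal F[x]$, combining the definition of real-closed with Theorem~\ref{odd extension is formally real}. The case $n=1$ is trivial. For $n>1$ odd, factor $f$ into irreducibles in $\mathcal F[x]$. Since $\deg f$ is odd, at least one irreducible factor $g$ has odd degree $m \le n$. If $m<n$, the induction hypothesis applied to $g$ yields a root of $g$, hence of $f$, in $\mathcal F$. So we may assume $f$ itself is irreducible of odd degree $n$, and we must show $n=1$.

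Take $f$ irreducible and form $\mathbb K=\mathcal F[x]/(f)$. This is a field extension of $\mathcal F$ of degree $n$, which is odd, and it is algebraic because it is finite. By Theorem~\ref{odd extension is formally real}, $\mathbb K$ is formally real. Since $\mathcal F$ is real-closed, no proper algebraic extension of $\mathcal F$ is formally real. Hence $\mathbb K=\mathcal F$, so $n=[\mathbb K:\mathcal F]=1$. Thus $f$ is linear and has a root in $\mathcal F$.

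In fact the induction can be avoided: pick any odd-degree irreducible factor $g$ of $f$, which must exist by parity of degrees. The argument above shows $\deg g=1$, and this gives the root directly.

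I do not expect a real obstacle. The step that carries the content is the appeal to Theorem~\ref{odd extension is formally real}, which rests on Springer's theorem. The only point to check is that the degree of $\mathcal F[x]/(g)$ over $\mathcal F$ equals $\deg g$ and is therefore odd.
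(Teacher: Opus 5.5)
Your proof is correct, and it departs from the paper's exactly where the paper's argument goes wrong. Both start the same way: Theorem~\ref{odd extension is formally real} together with real-closedness shows that $\mathcal F$ has no proper extension of odd degree. The paper then takes a splitting field $L$ of $f$, asserts $[L:\mathcal F]=n$, and concludes $L=\mathcal F$. That step is false. The degree of a splitting field of a degree-$n$ polynomial divides $n!$ and is in general neither $n$ nor odd. The conclusion $L=\mathcal F$ would also mean that $f$ splits completely over $\mathcal F$. Both fail already for $x^3-1$ over the real-closed field $\mathbb R$: it has the factor $x^2+x+1$, its splitting field is $\mathbb C$, and $[\mathbb C:\mathbb R]=2$.

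Your argument fills exactly this gap. By parity, some irreducible factor $g$ of $f$ has odd degree. Its stem field $\mathcal F[x]/(g)$ has degree exactly $\deg g$, so it is a genuine odd-degree extension, which is what the \emph{no proper odd-degree extension} fact can be applied to. The result is precisely the statement: one root in $\mathcal F$, not a full splitting. So your route is the correct repair of the paper's proof rather than merely an alternative. The parity observation about irreducible factors is the ingredient the paper omits. Your direct version, without the induction, is the cleanest form.
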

\begin{proof}
    Since $\mathcal F$ is real-closed, it does not possess any algebraic extension that is formally real. Then by the above Theorem \ref{odd extension is formally real}, $\mathcal F$ can not have any proper odd degree extension.
 Consider a polynomial $f(x) \in \mathcal F[x]$ of odd degree $n$. Let $L$ be a splitting field of the polynomial $f(x)$ over $\mathcal F$. Since $[L:\mathcal F]=n$ is odd, and $\mathcal F$ has no non trivial odd-degree extension, we conclude that $L=\mathcal F$. Therefore, any odd degree polynomial in $ \mathcal F[x]$ possesses a root in $\mathcal F$.

\end{proof}
\section{Main theorem}
In this section, we discuss the proof of the Fundamental Theorem of Algebra, building upon the concepts and discussions from earlier sections. The idea of this proof is due to Artin and Schreier.
\medskip 

\begin{theorem}\label{Euclidean is quad closed}
    Let $\mathbb E$ be a Euclidean field. Then $\mathbb E(\sqrt{-1})$ is quadratically closed. 
\end{theorem}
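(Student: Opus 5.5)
The plan is to exhibit an explicit square root for every element $a+bi$ of $\mathbb E(\sqrt{-1})$, where $i=\sqrt{-1}$ and $a,b\in\mathbb E$. This works because $\mathbb E(\sqrt{-1})$ is a degree-two extension with basis $\{1,i\}$. Note first that $-1$ is not a square in $\mathbb E$, since $\mathbb E$ is formally real, so the extension really is proper.

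The first step is to use that $\mathbb E$ is Euclidean. Hence $\mathbb E$ carries an ordering $\Sigma=\mathbb E^2$, and every non-negative element is a square. Proposition \ref{Euclidean is Pythagorean} also makes $\mathbb E$ Pythagorean, so $a^2+b^2=c^2$ for some $c\in\mathbb E$. Replacing $c$ by $-c$ if needed, we may take $c\ge 0$.

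The second step, which I expect to be the main obstacle, is to check that both $\frac{c+a}{2}$ and $\frac{c-a}{2}$ are non-negative. Since $c^2=a^2+b^2\ge a^2$, we get $c^2-a^2=(c-a)(c+a)\ge 0$. Moreover $(c+a)+(c-a)=2c\ge 0$. These two facts together force both factors to be non-negative: if one were negative, the product condition would make the other non-positive, and then their sum would be negative, unless we are in the degenerate case where everything is zero. The element $\frac12=2\cdot(\frac12)^2$ is positive, since $2=1+1$ is a sum of squares. Therefore $\frac{c+a}{2}=x^2$ and $\frac{c-a}{2}=y^2$ for some $x,y\in\mathbb E$.

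The third step is to fix the signs. We have $x^2-y^2=a$ and $(2xy)^2=4x^2y^2=c^2-a^2=b^2$, so $2xy=\pm b$. Replacing $y$ by $-y$ if necessary arranges $2xy=b$. Then
\[
(x+yi)^2=x^2-y^2+2xyi=a+bi,
\]
so every element of $\mathbb E(\sqrt{-1})$ is a square. This shows that $\mathbb E(\sqrt{-1})$ is quadratically closed.
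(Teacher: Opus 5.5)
Your proof is correct and follows essentially the paper's route: write the element as $(x+y\sqrt{-1})^2$, use the Pythagorean property to get $c$ with $c^2=a^2+b^2$, and use the Euclidean property to take square roots of $\frac{c\pm a}{2}$. Your choice $c\ge 0$ makes both $\frac{c+a}{2}$ and $\frac{c-a}{2}$ non-negative, so $x,y\in\mathbb E$, and fixing the sign of $y$ then gives $2xy=b$; this is tidier than the paper's case split and its formula $y=\frac{b}{2x}$, which silently needs $x\neq 0$.
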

\begin{proof}
    Consider an arbitrary non-zero element $a+(\sqrt{-1})b \in \mathbb E(\sqrt{-1})$. To prove that $\mathbb E(\sqrt{-1})$ is quadratically closed, it suffices to show that there exists $x,y \in \mathbb E$ such that  $a+(\sqrt{-1})b=(x+(\sqrt{-1})y)^2$. Expanding the square and comparing both sides, we get $a=x^2-y^2$ and $b=2xy$. Therefore $y=\frac{b}{2x}$. Substituting the value of $y$, we get
\begin{align*}
    & a=x^2- \frac{b^2}{4x^2}\\ 
     \text{i.e., } &x^4-ax^2-\frac{b^2}{4}=0
\end{align*}
    Solving, we get $x^2=\frac{1}{2}(a+\sqrt{a^2+b^2})$. As $\mathbb E$ is Pythagorean (by Proposition \ref{Euclidean is Pythagorean}), we have $a^2+b^2=c^2$ for some $c \in \mathbb E$. Thus, $x^2=\frac{1}{2}(a \pm c)$. 
    \medskip 
    
    \textbf{Case I:} If $a\geq c$, then $a \pm c \geq 0$. Since $E$ is Euclidean, we have $a \pm c =d^2$ for some $d \in \mathbb E$. Thus $x^2=\frac{1}{2}d^2$. Hence, $x= \pm \frac{1}{\sqrt{2}}d \in \mathbb E$ and $y=\frac{b}{2x} \in \mathbb E$.
\medskip 

    \textbf{Case II:} Suppose $a \pm c \leq 0$. Then we have $a \pm c = -d^2$ for some $d \in \mathbb E$. Thus  $x^2=-\frac{1}{2}d^2$. Hence, $x= \pm \frac{1}{\sqrt{2}}(\sqrt{-1})d \in \mathbb E(\sqrt{-1})$ and $y=\frac{b}{2x} \in \mathbb E(\sqrt{-1})$.
    
    Therefore, every element of $\mathbb E(\sqrt{-1})$ is a square. Thus, $\mathbb E(\sqrt{-1})$ is quadratically closed.
\end{proof}
\medskip 

\begin{theorem} \label{main theorem}
  Suppose $\mathcal F$ is a Euclidean field where every odd degree polynomial in $ \mathcal F[x]$ has a root within $\mathcal F$. Then $\mathcal F(\sqrt{-1})$ is algebraically closed. 
\end{theorem}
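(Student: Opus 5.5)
We follow the Artin--Schreier Galois-theoretic argument. Put $\mathbb K=\mathcal F(\sqrt{-1})$. Since $\mathcal F$ is Euclidean, it is formally real, so $-1$ is not a square in $\mathcal F$ and $[\mathbb K:\mathcal F]=2$. Also $\mathcal F$ has characteristic $0$, so every extension in sight is separable. To prove that $\mathbb K$ is algebraically closed, it suffices to show that every finite extension $\mathbb L/\mathbb K$ is trivial. Given such an $\mathbb L$, we replace it by the normal closure $\mathbb M$ of $\mathbb L$ over $\mathcal F$. Then $\mathbb M/\mathcal F$ is a finite Galois extension containing $\mathbb K$. Let $G=\mathrm{Gal}(\mathbb M/\mathcal F)$ and write $|G|=2^k m$ with $m$ odd.

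\textbf{Step 1: removing the odd part.} Let $P$ be a Sylow $2$-subgroup of $G$, and let $\mathbb M^P$ be its fixed field. Then $[\mathbb M^P:\mathcal F]=m$ is odd. By the primitive element theorem, $\mathbb M^P=\mathcal F(\alpha)$, where the minimal polynomial of $\alpha$ is irreducible of odd degree $m$. By hypothesis this polynomial has a root in $\mathcal F$. An irreducible polynomial with a root in $\mathcal F$ must be linear, so $m=1$. Hence $G$ is a $2$-group, and $|G|=2^k$ with $k\ge 1$ because $\mathbb K\subseteq\mathbb M$.

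\textbf{Step 2: no quadratic extensions of $\mathbb K$.} Let $H=\mathrm{Gal}(\mathbb M/\mathbb K)$, which has order $2^{k-1}$. Suppose $k\ge 2$. A nontrivial $p$-group has a subgroup of index $p$, so $H$ has a subgroup $H_1$ of index $2$. Its fixed field $\mathbb M^{H_1}$ is then a degree-$2$ extension of $\mathbb K$. In characteristic $\neq 2$, every quadratic extension has the form $\mathbb K(\sqrt{\beta})$ with $\beta\in\mathbb K$ not a square. But Theorem \ref{Euclidean is quad closed} says $\mathbb K$ is quadratically closed, so no such $\beta$ exists. This contradiction forces $k=1$, so $\mathbb M=\mathbb K$ and therefore $\mathbb L=\mathbb K$.

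\textbf{Main obstacle.} The only delicate step is the group theory: using Sylow's theorem for the odd part, and the existence of an index-$2$ subgroup in a nontrivial $2$-group. The latter follows from the fact that $p$-groups have subgroups of every order dividing the group order, which can be proved by induction using the nontrivial center. Everything else reduces to the Galois correspondence and Theorem \ref{Euclidean is quad closed}.
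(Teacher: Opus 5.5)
Your proof is correct and follows the same Artin--Schreier argument as the paper. You pass to the Galois closure over $\mathcal F$, use a Sylow $2$-subgroup together with the odd-degree hypothesis to show the Galois group is a $2$-group, and then use the quadratic closedness of $\mathcal F(\sqrt{-1})$ (Theorem \ref{Euclidean is quad closed}) to rule out any proper $2$-extension, filling in along the way two steps the paper leaves implicit: you justify via the primitive element theorem that $\mathcal F$ has no proper odd-degree extensions, and you make explicit, via an index-$2$ subgroup of $\mathrm{Gal}(\mathbb M/\mathbb K)$, the step the paper only calls ``repeated application of Galois theory''.
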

\begin{proof} 
Since $\mathcal F$ is a Euclidean field, by the above Theorem \ref{Euclidean is quad closed}, $\mathcal F(\sqrt{-1})$ is quadratically closed.
  Consider an algebraic extension $\mathbb K$ of $\mathcal F(\sqrt{-1})$ such that $[\mathbb K:\mathcal F(\sqrt{-1})]=2^\alpha \beta >1$, where $\beta$ is an odd integer.
    Let us consider the Galois closure $L$ (enough to consider a normal closure as ${\rm char}(\mathcal F)=0$) of $\mathbb K$ over $\mathcal F$.
    Let $H$ be a $2$-Sylow subgroup of the Galois group ${\rm Gal}(L/ \mathcal F)$ and let $L^ H$ denote its fixed field. Then we have, $[L:L^ H]=|H|=2^\alpha$. Since any  polynomial of odd degree in $ \mathcal F[x]$ has a root within $\mathcal F$ (by \cite{lam}, Chapter 8, Proposition 2.1), it follows that $\mathcal F$ does not admit extensions of odd degree. Hence, we have $L^ H = \mathcal F$, implying $[L^H:\mathcal F]=\beta = 1$. Therefore, $\mathcal F(\sqrt{-1})$ admits only $2$-extensions, i.e., any algebraic extension of $\mathcal F(\sqrt{-1})$ has degree $2^\alpha$ for some $\alpha$. On the other hand, since $\mathcal F(\sqrt{-1})$ is quadratically closed, by repeated application of Galois theory, we deduce that $L=\mathcal F(\sqrt{-1})$.
    Thus, any polynomial in $\mathcal F(\sqrt{-1})[x]$ has a root in $\mathcal F(\sqrt{-1})$, implying $\mathcal F(\sqrt{-1})$ is algebraically closed.
\end{proof}
\begin{corollary}
    The field of real numbers $\mathbb R$ is Euclidean, and $\mathbb R(\sqrt{-1})=\mathbb C$ is algebraically closed.
\end{corollary}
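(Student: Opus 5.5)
The plan is to verify the two hypotheses of Theorem \ref{main theorem} for $\mathcal F=\mathbb R$, namely that $\mathbb R$ is Euclidean and that every odd degree polynomial in $\mathbb R[x]$ has a root in $\mathbb R$. The conclusion that $\mathbb R(\sqrt{-1})=\mathbb C$ is algebraically closed then follows at once from that theorem. The analytic input reduces to two standard properties of the reals: every non-negative real has a real square root, and continuous functions on intervals have the intermediate value property. Both come from the completeness (least upper bound) axiom.

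\textbf{Step 1: $\mathbb R$ is Euclidean.} Take $\Sigma=\{x\in\mathbb R: x\ge 0\}$. It contains all squares, is closed under addition and multiplication, satisfies $\Sigma\cup-\Sigma=\mathbb R$ and $\Sigma\cap-\Sigma=\{0\}$, so it is an ordering and $\mathbb R$ is formally real. It remains to show every $a\ge 0$ is a square. Apply the intermediate value theorem to $g(x)=x^2-a$ on $[0,a+1]$: we have $g(0)\le 0$ and $g(a+1)>0$, so $g$ has a root. Hence $\mathbb R^\times=\mathbb R^{\times2}\cup(-\mathbb R^{\times2})$, which is the definition of Euclidean. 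As a consistency check, Proposition \ref{Euclidean is Pythagorean} then gives that $\mathbb R$ is Pythagorean with a unique ordering.

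\textbf{Step 2: odd degree polynomials have real roots.} Let $f(x)=x^n+a_{n-1}x^{n-1}+\cdots+a_0\in\mathbb R[x]$ with $n$ odd, normalized to be monic. Put $M=1+\sum|a_i|$. For $|x|\ge M$ we have $|a_{n-1}x^{n-1}+\cdots+a_0|<|x|^n$, so $f(x)$ has the sign of $x^n$. Thus $f(M)>0$ and $f(-M)<0$, because $n$ is odd. Since $f$ is continuous, the intermediate value theorem gives a root in $(-M,M)$.

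\textbf{Step 3: conclusion.} With both hypotheses verified, Theorem \ref{main theorem} gives that $\mathbb R(\sqrt{-1})$ is algebraically closed. Since $x^2+1$ is irreducible over $\mathbb R$, $\mathbb R(\sqrt{-1})\cong\mathbb R[x]/(x^2+1)$, and this is $\mathbb C$. Alternatively, one could note that Steps 1–2 together with Theorem \ref{odd extension is formally real} suggest $\mathbb R$ is real-closed, and use the corollary on real-closed fields. The direct route through Theorem \ref{main theorem} is cleaner, so I will use it.

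The only step that is not pure algebra is the appeal to the intermediate value theorem, which is where completeness of $\mathbb R$ enters. I expect the main obstacle to be deciding how much of it to justify. I would cite it as standard, or sketch it by taking the supremum of $\{x\in[-M,M]: f(x)<0\}$ and showing $f$ vanishes there by continuity. The coefficient estimate in Step 2 is routine once $M$ is chosen as above.
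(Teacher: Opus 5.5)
Your proof is correct and follows the paper's route. You show that $\mathbb R$ is Euclidean and that every odd-degree polynomial in $\mathbb R[x]$ has a real root by the intermediate value theorem, and then you apply Theorem~\ref{main theorem}.

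The paper asserts that $\mathbb R$ is Pythagorean with a unique ordering and cites Proposition~\ref{Euclidean is Pythagorean}. You instead check the Euclidean property directly, taking square roots of non-negative reals by the intermediate value theorem, which makes your argument more self-contained.

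Your side remark that Steps 1--2 and Theorem~\ref{odd extension is formally real} "suggest" real-closedness should be dropped. That theorem only concerns odd-degree extensions, and ruling out all formally real proper algebraic extensions is essentially the content of Theorem~\ref{main theorem} itself. Nothing in your proof depends on that remark.
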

\begin{proof}
    As the field of real numbers $\mathbb R$ is a Pythagorean field with a unique ordering $R^{\times 2}$, by Proposition \ref{Euclidean is Pythagorean}, $\mathbb R$ is Euclidean. Consider an odd degree polynomial $f(x) \in \mathbb R[x]$. Any real-valued polynomial is also a continuous function over $\mathbb R$. 
    For an odd-degree polynomial, as $x$ approaches positive infinity, the polynomial tends toward positive or negative infinity depending on the leading coefficient's sign. Similarly, as $x$ approaches negative infinity, the polynomial tends toward the opposite sign. By the Intermediate Value Theorem for real numbers, there exists some $x_0 \in \mathbb R$ such that $f(x_0)=0$. Thus, any odd degree polynomial in $\mathbb R[x]$ has a root within $\mathbb R$. Hence, by Theorem \ref{main theorem}, $\mathbb R(\sqrt{-1}) =\mathbb C$ is algebraically closed.
\end{proof}
\medskip 

\noindent 
\textbf{Acknowledgements:} We would like to extend our sincere thanks to the anonymous referee for their insightful comments and constructive feedback, which significantly improved the quality of this manuscript. Their thorough review and valuable suggestions have been instrumental in refining our work. We deeply appreciate the time and effort devoted to evaluating our submission.

\end{document}